\newtheorem{thm}{Theorem}
\newtheorem{lem}[thm]{Lemma}
\newtheorem{cor}[thm]{Corollary}
\newtheorem{prop}[thm]{Proposition}
\theoremstyle{remark}
\newtheorem{rmk}[thm]{Remark}
\theoremstyle{definition}
\numberwithin{equation}{section}
\newcommand{\R}{\mathbb{R}}
\newcommand{\be}{\begin{equation}}
\newcommand{\ee}{\end{equation}}
\newcommand{\ben}{\begin{equation*}}
\newcommand{\een}{\end{equation*}}
\def\barroman#1{\sbox0{#1}\dimen0=\dimexpr\wd0+1pt\relax
  \makebox[\dimen0]{\rlap{\vrule width\dimen0 height 0.06ex depth 0.06ex}%
    \rlap{\vrule width\dimen0 height\dimexpr\ht0+0.03ex\relax 
            depth\dimexpr-\ht0+0.09ex\relax}%
    \kern.5pt#1\kern.5pt}}
\def\XXint#1#2#3{{\setbox0=\hbox{$#1{#2#3}{\int}$}
     \vcenter{\hbox{$#2#3$}}\kern-.5\wd0}}
\newcommand{\h}{\mathscr{H}}
\newcommand{\A}{\mathcal{A}}
\newcommand{\emb}{\hookrightarrow}
\newcommand{\sm}{{\setminus}}
\def\XXint#1#2#3{{\setbox0=\hbox{$#1{#2#3}{\int}$}
     \vcenter{\hbox{$#2#3$}}\kern-.5\wd0}}
\begin{document} 	               
       \thanks{\textsl{Mathematics Subject Classification (MSC 2010):} Primary 53A10; Secondary 53C42, 49Q05.}
     \title[Geometric convergence results for minimal surfaces via bubbling analysis]{Geometric convergence results for closed minimal surfaces via bubbling analysis}
     \author{Lucas Ambrozio, Reto Buzano, Alessandro Carlotto and Ben Sharp}
     \address{ \noindent L. Ambrozio: IMPA - Associa\c{c}\~{a}o Instituto Nacional de Matem\'atica Pura e Aplicada, Rio de Janeiro, RJ, Brazil, 22460-320, \textit{E-mail address: l.ambrozio@impa.br}
     	\newline \newline 
     	\indent R. Buzano: School of Mathematical Sciences, Queen Mary University of London, London E1 4NS, United Kingdom, \textit{E-mail address: r.buzano@qmul.ac.uk} 
	\newline \indent Universit\`a degli Studi di Torino, Dipartimento di Matematica, 10123 Torino, Italy, \textit{E-mail address: reto.buzano@unito.it}
     	\newline\newline
     	\indent A. Carlotto: ETH - Department of Mathematics, 8092 Z\"urich, Switzerland, \textit{E-mail address: alessandro.carlotto@math.ethz.ch}
     	 \newline \newline 
     \indent B. Sharp: School of Mathematics,
     University of Leeds, Leeds LS2 9JT, United Kingdom \textit{E-mail address: B.G.Sharp@leeds.ac.uk}}

     	\begin{abstract}We present some geometric applications, of global character, of the bubbling analysis developed by Buzano and Sharp for closed minimal surfaces, obtaining smooth multiplicity one convergence results under upper bounds on the Morse index and suitable lower bounds on either the genus or the area. For instance, we show that given any Riemannian metric of positive scalar curvature on the three-dimensional sphere the class of embedded minimal surfaces of index one and genus $\gamma$ is sequentially compact for any $\gamma\geq 1$.
     		
     			Furthemore, we give a quantitative description of how the genus drops as a sequence of minimal surfaces converges smoothly, with mutiplicity $m\geq 1$, away from finitely many points where curvature concentration may happen. This result exploits a sharp estimate on the multiplicity of convergence in terms of the number of ends of the bubbles that appear in the process.
     			\end{abstract}
     	
     	\maketitle    
     	
     	\section{Introduction}
     	
     	Let $(N^3,g)$ be a compact Riemannian manifold of dimension three, without boundary. We shall be concerned here with certain global phenomena related to the convergence of a sequence of closed minimal surfaces, smoothly embedded in $N$, of bounded area and index. To that end, let us introduce the following notations:
     	\[
     		\mathfrak{M}(\Lambda,I) := \{ M \in \mathfrak{M} \ : \ \h^2(M) \leq \Lambda,\ index(M) \leq I\}
     			\]
     			is the set of closed, connected, smooth and embedded minimal surfaces (denoted by $\mathfrak{M}$) with area and Morse index bounded from above, and for $p$ an integer greater or equal than one
     \[
     			\mathfrak{M}_p(\Lambda,\mu) := \{ M \in \mathfrak{M} \ : \ \h^2(M) \leq \Lambda,\ \lambda_p \geq -\mu\}
     			\]
     			is the set of closed, connected, smooth and embedded minimal surfaces with bounded area and $p^{th}$ eigenvalue $\lambda_p$ of the Jacobi operator  bounded from below.

     	In \cite{Sha15}, the fourth-named author proved a compactness theorem for the set $\mathfrak{M}(\Lambda,I)$, and in later joint work Ambrozio-Carlotto-Sharp \cite{ACS15} also proved a similar compactness theorem for $\mathfrak{M}_p(\Lambda,\mu)$: given a sequence of minimal surfaces $\left\{M_k\right\}$ in $\mathfrak{M}(\Lambda,I)$ (or $\mathfrak{M}_p(\Lambda,\mu)$), there is some smooth limit in the same class to which the sequence sub-converges smoothly and graphically (with finite multiplicity $m$) away from a discrete set $\mathcal{Y}$ on the limit, where one witnesses the formation of necks. In order to describe the local picture in more detail, let us agree
     	to employ the word \textsl{bubble} to denote a complete, embedded, connected and non-flat minimal surface of finite total curvature in $\R^3$, namely satisfying $\mathcal{A}(\Sigma):=\int_{\Sigma}|A|^2\,d\h^2<\infty$.

     	The aforementioned concentration phenomenon was carefully analyzed by Buzano-Sharp in \cite{BS17} and their main result implies that, in the setting above, associated with each point $y\in\mathcal{Y}$ there is a finite and positive number $J_y$ of \textsl{bubbles} $\Sigma^{y}_{\ell}$ (for $\ell=1,\ldots, J_y$), that are suitable blow-up limits of the sequence at the concentration points, and such that the following identity holds:
  	\begin{equation}\label{eq:quant}
     	\lim_{k\to \infty} \A(M_k) = m\A(M) + \sum_{y\in \mathcal{Y}}\sum_{\ell=1}^{J_y} \A(\Sigma^y_\ell).
     	\end{equation}

     		Since we are working in ambient dimension three, we can combine the previous \textsl{quantization identity} with the Gauss-Bonnet theorem to obtain interesting information relating the topology of $M_k$ (with $k$ large) with that of the limit surface $M$ and of the bubbles that arise in the previous analysis. Specifically, one can derive from \eqref{eq:quant} the equation
     		\begin{equation}\label{eq:geomquant}
     		\chi(M_k)=m\chi (M)+\sum_{y\in\mathcal{Y}}\sum_{\ell=1}^{J_y} (\chi(\Sigma^y_{\ell})-b^y_{\ell})
     		\end{equation}
     		where $b^y_{\ell}$ is the number of ends of $\Sigma^y_{\ell}$.

     			A more detailed summary of these results and a short discussion of our conventions concerning the Euler characteristic of open and non-orientable surfaces, to the extent that is needed in the present paper, is given in Section \ref{sec:ancill}.
     			 
     		We will first employ the identity above together with another ancillary result, Lemma \ref{lem:sumindex}, to prove a strong compactness theorem for minimal surfaces of bounded index inside a 3-manifold of positive \textsl{scalar} curvature. By the work of Chodosh-Ketover-Maximo \cite{CKM15}, such sequences have uniformly bounded area, so that we are actually in the situation described above.

	\begin{thm}\label{thm:conv1}
		Let $(N^3,g)$ be a compact Riemannian manifold, without boundary, of positive scalar curvature. For a fixed integer $j\in\left\{0,1,2,3,4\right\}$ let $\left\{M_k\right\}$ be a sequence of closed, embedded minimal surfaces with Morse index bounded from above by $j$. If
		\begin{enumerate}
			\item[\underline{\textsl{either}}]{$\chi(M_k)<2-2j$ for all $k\in \mathbb{N}$,}
			\item[\underline{\textsl{or}}]{$\chi(M_k)< 4-2j$ for all $k\in \mathbb{N}$, and $N^3$ does not contain any  embedded minimal $\R\mathbb{P}^2$ (which happens, for instance, if $N^3$ is simply connected),}
		\end{enumerate}
		then, up to extracting a subsequence, one has that $\left\{M_k\right\}$ converges smoothly to some closed, embedded minimal surface $M$, of Morse index bounded from above by $j$, with multiplicity one.
	\end{thm}	
	
	\begin{rmk} To our knowledge, a result of this type is new even in the stable case, i.~e.\! when $j=0$. Indeed, what would be standard to prove, in the setting above, is that there must be smooth subsequential convergence at all points (that is to say $\mathcal{Y}=\emptyset$), but possibly with multiplicity $m=2$. In fact, well-known examples show that the statement above is sharp, in the sense that one can have sequences of stable minimal spheres converging with multiplicity two to a minimal projective plane. For instance, consider any three manifold containing an open subdomain that is isometric to the Riemannian product $S^2\times (-1,1)/\simeq$ where 
		\[
		(x,t)\simeq (x',t') \ \Leftrightarrow \ x'=-x, t'=-t. \ 
		\]
		Then any sphere $M_{t_0}$ described, in these coordinates, by $t=t_0$ for $t_0\in (0,1)$ is stable, totally geodesic and for any sequence $t_k\searrow 0$ one has that $M_{t_k}$ converges to $M_0\equiv\R\mathbb{P}^2$ with multiplicity two.
	\end{rmk}
	
	By the work of Choi-Schoen \cite{CSc85} we know that if $(N^3,g)$ has positive \textsl{Ricci} curvature, then a topological bound suffices to gain strong convergence with multiplicity one. However, one cannot expect the same conclusion to hold in the much broader setting of 3-manifolds of positive scalar curvature, as the above example shows. In fact, Colding and De Lellis presented in \cite{CdL05} a method to construct 3-manifolds of positive scalar curvature containing sequences of embedded, orientable minimal surfaces of any fixed genus $\gamma$, that converge to a minimal lamination with two singular points on a strictly stable minimal sphere.  Still, Theorem \ref{thm:conv1} shows how an extra assumption (e.~g.\! on the index) suffices to gain strong compactness as in the result by Choi and Schoen. A simple application of such a theorem is presented in the following corollary:
	\begin{cor}\label{cor:3sphere}
		Let $g$ be a Riemannian metric of positive scalar curvature on the three-dimensional sphere. The class of stable, embedded minimal surfaces is sequentially compact in the sense of smooth multiplicity one convergence. Similarly, the class of embedded minimal surfaces of index one and genus $\gamma$ is sequentially compact for any $\gamma\geq 1$ in the sense of smooth multiplicity one convergence.
	\end{cor}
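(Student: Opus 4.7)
The plan is to derive both statements by specializing Theorem \ref{thm:conv1} to $N^3 = S^3$, where the area bound needed to invoke that theorem is supplied automatically by the Chodosh--Ketover--Maximo estimate already cited in the excerpt. The pivotal topological input is that $S^3$ contains no embedded $\mathbb{RP}^2$: a closed non-orientable surface in an orientable $3$-manifold must represent a non-trivial class in mod $2$ homology, while $H_2(S^3;\mathbb{Z}/2\mathbb{Z})=0$. In particular no minimal $\mathbb{RP}^2$ can occur, so for every $j\in\{0,1,2\}$ alternative (2) of Theorem \ref{thm:conv1} is at our disposal, and each $M_k$ in the classes considered is automatically orientable.

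For the stable case ($j=0$) the hypothesis $\chi(M_k)<4-2j=4$ is trivially satisfied since every closed surface has $\chi\leq 2$. Theorem \ref{thm:conv1} then yields a smooth multiplicity-one subsequential limit $M$ of index zero, hence stable; since multiplicity-one smooth convergence forces $M$ to be diffeomorphic to the $M_k$, the limit lies in the prescribed class.

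For the index-one, genus-$\gamma$ case with $\gamma\geq 1$, orientability of $M_k$ gives $\chi(M_k)=2-2\gamma\leq 0<2=4-2j$, so Theorem \ref{thm:conv1} again applies and produces a smooth multiplicity-one limit $M$ of genus $\gamma$ and index at most one. The only step that is not bookkeeping is to exclude that $M$ is stable: here I would invoke the classical Schoen--Yau calculation, inserting $\phi\equiv 1$ into the Jacobi quadratic form and rewriting $\mathrm{Ric}(\nu,\nu)$ via the traced Gauss identity $R_N=2K_M+|A|^2+2\,\mathrm{Ric}(\nu,\nu)$ (using $H=0$) to obtain $\int_M R_N\leq 4\pi\chi(M)$. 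Since $R_N>0$ this forces $\chi(M)>0$, so $M\cong S^2$, contradicting $\gamma\geq 1$. Therefore $M$ has index exactly one and the limit belongs to the class.

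There is no genuine obstacle beyond Theorem \ref{thm:conv1}: the proof is essentially a topological verification resting on two well-known facts, the obstruction to embedding $\mathbb{RP}^2$ in $S^3$ and the rigidity of stable minimal surfaces in positive scalar curvature. What is slightly delicate, and would deserve a line of justification, is pinning down that smooth multiplicity-one convergence preserves the genus and the orientability in each of the cases above; once this is recorded, the corollary follows at once.
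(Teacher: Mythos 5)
Your argument is correct and is exactly the ``simple application'' of Theorem \ref{thm:conv1} that the paper intends: $S^3$ is simply connected so contains no embedded $\mathbb{RP}^2$, and alternative (2) of the theorem applies with $j=0$ (since $\chi(M_k)\leq 2<4$) and with $j=1$, $\gamma\geq 1$ (since orientability forces $\chi(M_k)=2-2\gamma\leq 0<2$), while smooth multiplicity-one convergence makes $M_k$ eventually a normal graph over, hence diffeomorphic to, the limit. Your supplementary step of ruling out a stable limit of genus $\gamma\geq1$ via the Schoen--Yau rearrangement is the right (and needed) observation, and is in any case the same computation the paper records in the proof of Lemma \ref{lem:stable-type}.
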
	
	
	We can also prove a counterpart of Theorem \ref{thm:conv1} which applies to minimal surfaces with a uniform, but possibly large, index bound. Yet, the topological lower bound one has to assume needs to be stronger due to the lack of classification results for complete (embedded) minimal surfaces $\Sigma\subset\R^3$ of index equal to any natural number greater or equal than four. We will circumvent this obstacle by exploiting the index estimates obtained by Chodosh and Maximo in \cite{CM14, CM18b}. We refer the reader to Section \ref{sec:comp} for the corresponding statement, see Theorem \ref{thm:conv2}.
			
			\
			
			The two results above can both be regarded as instances of a local-to-global correspondence, meaning that the understanding of complete minimal surfaces in the Euclidean space $\R^3$ is exploited to extract information for the blow-up analysis at the singular points of the convergence process, which in turn is needed to derive novel information on the space of minimal cycles inside a given Riemannian manifold. Compactness theorems of similar spirit have appeared (in the Euclidean setting) in the pioneering work by Ros \cite{R95} about the Hoffman-Meeks conjecture \cite{HM90b} and, relying on somehow different methods, by Traizet \cite{Tra04}.
			
			 One motivation for us to transplant those ideas to curved ambient spaces was provided by the recent, remarkable advances in the construction of closed, embedded minimal hypersurfaces in general ambient manifolds, either via min-max methods in the spirit of Almgren-Pitts \cite{P81} as developed by Marques and Neves, or studying the interfaces arising as suitable singular limits of solutions to the Allen-Cahn equation as proposed by Guaraco \cite{Gua18}. In particular, both approaches have been successful to prove the existence of infinitely many minimal surfaces in 3-manifolds under the assumption that the ambient Ricci curvature be positive (see \cite{MN13} by Marques and Neves, and \cite{GG16} by Gaspar and Guaraco) or instead under the assumption that the ambient metric be generic (see the works by Marques and Neves with K. Irie \cite{IMN18} and A. Song \cite{MNS17}, relying on earlier work on Weyl's law for the volume spectrum with Liokumovich  \cite{LMN18}, for the former technique and the very recent work \cite{CM18} by Chodosh and Mantoulidis for the latter one). Building on these advances, A. Song \cite{Son18} was then able to obtain an unconditional existence result, thereby providing a proof of Yau's conjecture (cf. \cite{Yau82}).

			As a different but related application, we further employ the bubbling analysis to prove a topological semicontinuity result, which formalizes the well-known intuition that the genus \textsl{can only drop} as a sequence of minimal surfaces converges smoothly away from a finite concentration set, as described above.

			\begin{thm}\label{thm:lsc}
				Let $(N^{3},g)$ be a smooth, compact, orientable Riemannian manifold without boundary.
				Consider a sequence of closed, orientable, embedded minimal surfaces $\{M_k\}\subset \mathfrak{M}_p(\Lambda,\mu)$ for some fixed constants $\Lambda\in \R$, $\mu\in \R$ independent of $k$, and assume it has an orientable limit
				 $M\in \mathfrak{M}_p(\Lambda,\mu)$, in the sense of smooth graphical convergence with multiplicity $m\geq 1$ away from a finite set $\mathcal{Y}$ of points. 
				Then for all sufficiently large $k\in\mathbb{N}$ one has
				\begin{equation}\label{eq:lsc}
				m\cdot  genus(M) + \sum_{y\in\mathcal{Y}}\sum_{\ell=1}^{J_y} genus (\Sigma^y_\ell) \leq genus (M_k).
				\end{equation}
				The inequality above is strict unless
				\[
				m=1+\sum_{y\in\mathcal{Y}}\sum_{\ell=1}^{J_y}(b^y_{\ell}-1).
				\]
			\end{thm}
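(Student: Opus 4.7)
The plan is to convert the topological quantization identity \eqref{eq:geomquant} into a numerical identity relating the genera of $M_k$, $M$, and the bubbles, and then use a simple combinatorial argument on the necks of the bubble-tree decomposition to extract the multiplicity bound that implies \eqref{eq:lsc}.

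Since $N$, $M_k$, $M$ are all orientable, the bubbles $\Sigma_\ell^y$, which arise as rescaled limits of $M_k$ in $\R^3$, are themselves orientable; hence each one is conformally a compact Riemann surface of genus $genus(\Sigma_\ell^y)$ with $b_\ell^y$ punctures, and in particular $\chi(\Sigma_\ell^y) = 2 - 2\,genus(\Sigma_\ell^y) - b_\ell^y$. Combining this with $\chi = 2 - 2\,genus$ for the closed orientable surfaces $M_k, M$, and substituting into \eqref{eq:geomquant}, a direct simplification gives
\[
genus(M_k) - m\,genus(M) - \sum_{y \in \mathcal{Y}}\sum_{\ell=1}^{J_y} genus(\Sigma_\ell^y) \;=\; 1 - m + \sum_{y \in \mathcal{Y}}\sum_{\ell=1}^{J_y} \bigl(b_\ell^y - 1\bigr)
\]
for all sufficiently large $k$. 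Thus both parts of the conclusion---the inequality \eqref{eq:lsc} and its equality characterization---reduce precisely to the multiplicity bound
\[
m \;\leq\; 1 + \sum_{y \in \mathcal{Y}}\sum_{\ell=1}^{J_y} \bigl(b_\ell^y - 1\bigr).
\]

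To establish this bound, note first that since $M$ is orientable and embedded in the orientable $N$ it is two-sided, so for $k$ large the $m$ sheets of $M_k$ lying over $M\setminus\mathcal{Y}$ can be globally labelled $1, \ldots, m$ by signed normal distance to $M$. In particular, for $r>0$ small, $M_k \setminus \bigcup_{y \in \mathcal{Y}} B_r(y)$ has exactly $m$ connected components. I would then build a graph $G_k$ whose vertices are these $m$ sheets together with the bubbles $\Sigma_\ell^y$, and whose edges record the necks of the bubble-tree decomposition supplied by the Buzano--Sharp analysis: each of the $b_\ell^y$ ends of each bubble is attached through a neck either to another bubble (at a different scale, over the same point $y$) or to one of the sheets. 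Because $M_k$ is connected, $G_k$ is connected, so $|E(G_k)| \geq m + \sum_y J_y - 1$. On the other hand, letting $e_{bb}$ and $e_{bs}$ denote the numbers of bubble-bubble and bubble-sheet necks respectively, one has $\sum_{y,\ell} b_\ell^y = 2 e_{bb} + e_{bs}$, whence $|E(G_k)| = \sum_{y,\ell} b_\ell^y - e_{bb}$. Combining gives
\[
m \;\leq\; 1 - e_{bb} + \sum_{y,\ell} \bigl(b_\ell^y - 1\bigr),
\]
which proves the bound and, moreover, shows that equality forces $e_{bb}=0$ together with $G_k$ being a spanning tree.

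The principal technical obstacle will be extracting the graph $G_k$ cleanly from the Buzano--Sharp bubble-tree picture, making sure every bubble end is unambiguously paired with either an adjacent sheet or the end of another bubble at a comparable scale, so that the edge count is justified. Once this bookkeeping is in place, the multiplicity bound and its equality case follow at once, and back-substitution into the genus identity displayed above completes the proof of Theorem \ref{thm:lsc}.
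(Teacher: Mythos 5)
Your reduction of Theorem~\ref{thm:lsc} to the multiplicity bound $m \leq 1 + \sum_{y,\ell}(b^y_\ell - 1)$ is exactly right and is precisely how the paper proceeds: the first half of your argument reproduces the paper's proof almost verbatim, dividing the topological quantization identity \eqref{eq:geomquant} by two and rearranging. The key lemma you isolate is Proposition~\ref{pro:mult}(1) in the paper, so the structure of your reduction is identical.

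Where you genuinely diverge is in \emph{how} you propose to prove the multiplicity bound. The paper's proof of Proposition~\ref{pro:mult} is a climbing argument: using two-sidedness of $M$ to order the $m$ sheets of $M_k \setminus \bigcup_{y,\ell} B_{R r^{y,\ell}_k}(p^{y,\ell}_k)$ by signed height, one starts on the lowest sheet, enters a bubble region, ascends to its topmost end (gaining at most $b_1$ levels on the first bubble and at most $b_i - 1$ on each subsequent one), exits, and repeats until the top sheet is reached; connectedness of $M_k$ guarantees the process does not stall, and the bound falls out as $m \leq b_1 + (b_2-1) + \cdots + (b_I -1)$. Your approach instead encodes the bubble-tree as a connected (multi)graph $G_k$ with vertices the sheets and bubbles, and edges the necks, and deduces the bound from the Euler-type count $|E(G_k)| \geq |V(G_k)| - 1$ together with the degree count $\sum_{y,\ell} b^y_\ell = 2e_{bb} + e_{bs}$. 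Both routes ultimately rest on the same two facts from the Buzano--Sharp neck analysis (graphicality of $M_k$ away from the shrinking bubble regions, and the $\varepsilon$-closeness of $M_k$ to the rescaled bubbles inside them), and both use connectedness of $M_k$ as the driving input. The paper's path argument is leaner and sidesteps the need to verify that the graph $G_k$ is unambiguously defined (you correctly flag this as the main technical obstacle: one must rule out sheet-to-sheet necks that bypass all bubbles, and handle intermediate plane-like scales cleanly); your graph argument, once that bookkeeping is in place, buys a strictly stronger estimate $m \leq 1 + \sum_{y,\ell}(b^y_\ell-1) - e_{bb}$ and a sharper characterization of the equality case (forcing $e_{bb}=0$ and $G_k$ a spanning tree), which the paper does not record. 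In short: same reduction, different and arguably more informative proof of the key multiplicity lemma, contingent on filling in the graph construction.
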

			
				\begin{rmk}\label{rem:equal}
					With respect to the previous statement, notice that if equality occurs then there are always \textsl{at most} $m-1$ bubbles, and if there are \textsl{exactly} $m-1$ bubbles then they must all have two ends and thus be catenoids by \cite{Sc83}. In particular, if $m=2$ and equality holds, then $\mathcal{Y}=\left\{y\right\}, J_y=1$ and the only bubble is a catenoid.
					
				\end{rmk}
			
			When $N^3$ is not assumed to be orientable, or when the surfaces $M_k$ or the limit $M$ are allowed to be non-orientable, one can still recover results in the same spirit, see Remark \ref{rem:nonorient}.
			
				As a direct consequence of the statement above, we can rigorously justify the fact that genus strictly drops when there is at least one point of bad convergence:
				
				\begin{cor}\label{cor:folk}
					In the setting of Theorem \ref{thm:lsc}, if $\liminf_{k\to\infty}genus(M_k)\geq 1$ and the convergence of $M_k$ to $M$ is \textsl{not} smooth (i.~e.\! if $|\mathcal{Y}|>0$) then $genus(M)<\liminf_{k\to\infty}genus(M_k)$. In particular, if  	$\liminf_{k\to\infty}genus(M_k)=1$ then $genus(M)=0$, i.~e.\! $M\simeq S^2$.
					\end{cor}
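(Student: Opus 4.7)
The plan is to deduce the corollary directly from Theorem \ref{thm:lsc} via a case analysis on whether the inequality \eqref{eq:lsc} is strict or is attained, exploiting integrality of all quantities involved in both regimes.

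For the strict case, I would observe that both sides of \eqref{eq:lsc} are non-negative integers, so for large $k$ one actually has
\begin{equation*}
m\cdot genus(M) + \sum_{y\in\mathcal{Y}}\sum_{\ell=1}^{J_y} genus(\Sigma_\ell^y) + 1 \leq genus(M_k).
\end{equation*}
Combined with $m\geq 1$ and the non-negativity of each $genus(\Sigma_\ell^y)$, this gives $genus(M)+1\leq genus(M_k)$ for all large $k$, and the desired conclusion follows by passing to the liminf.

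The equality case is the substantive step. From $m = 1+\sum_{y,\ell}(b_\ell^y-1)$, together with the fact that every bubble carries at least two ends (by the classical result that a complete embedded minimal surface of finite total curvature in $\R^3$ with a single end must be a plane, which our non-flatness convention rules out), the hypothesis $|\mathcal{Y}|\geq 1$ forces the existence of at least one bubble and hence $m\geq 2$. I would then rewrite the equality as
\begin{equation*}
genus(M_k) - genus(M) = (m-1)\,genus(M) + \sum_{y\in\mathcal{Y}}\sum_{\ell=1}^{J_y} genus(\Sigma_\ell^y)
\end{equation*}
and consider two sub-cases: if $genus(M)\geq 1$, the right-hand side is at least $m-1\geq 1$; if instead $genus(M)=0$, the right-hand side equals $genus(M_k)$, which is at least one for all sufficiently large $k$ thanks to the standing assumption $\liminf_k genus(M_k) \geq 1$.

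Combining the two cases yields $genus(M)<\liminf_k genus(M_k)$. The in-particular assertion is then immediate: when $\liminf_k genus(M_k)=1$, the strict drop and non-negativity force $genus(M)=0$, and since $M$ is a closed, connected and orientable surface it must be diffeomorphic to $S^2$. The delicate step in this plan is precisely the equality case, where without the rigid formula $m=1+\sum(b_\ell^y-1)$ supplied by Theorem \ref{thm:lsc} and the two-endedness of bubbles one could a priori envisage scenarios in which genus is preserved despite the formation of necks.
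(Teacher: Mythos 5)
Your argument is correct, and since the paper states Corollary \ref{cor:folk} without a written proof (it is presented as a ``direct consequence'' of Theorem \ref{thm:lsc}), there is no paper proof to compare against line by line; but it is worth pointing out a shortcut you could have taken. The dichotomy between the strict and equality cases of \eqref{eq:lsc}, and in particular the appeal to the rigid formula $m = 1 + \sum_{y,\ell}(b^y_\ell - 1)$ together with the two-endedness of embedded finite-total-curvature bubbles, is more machinery than the statement requires. The hypothesis $|\mathcal{Y}|>0$ already forces $m\geq 2$ without any reference to the number of ends of the bubbles: by \cite{Sha15} (as recalled in the proof of Proposition \ref{pro:mult}), multiplicity one convergence implies $\mathcal{Y}=\emptyset$, so $|\mathcal{Y}|>0$ gives $m\geq 2$ outright. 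Feeding this into \eqref{eq:lsc} alone yields
\begin{equation*}
2\,genus(M)\;\leq\; m\cdot genus(M)+\sum_{y\in\mathcal{Y}}\sum_{\ell=1}^{J_y}genus(\Sigma^y_\ell)\;\leq\; genus(M_k)
\end{equation*}
for all large $k$, and then the two sub-cases $genus(M)\geq 1$ (giving $genus(M)<2\,genus(M)\leq genus(M_k)$) and $genus(M)=0$ (giving $genus(M)=0<1\leq\liminf_k genus(M_k)$) close the argument in one stroke, with no need to separate out the equality regime. Your version buys nothing additional here, and your closing remark that ``without the rigid formula $\dots$ one could a priori envisage scenarios in which genus is preserved'' is therefore not quite right: the inequality \eqref{eq:lsc} together with $m\geq 2$ already rules out such a scenario. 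Still, every step you wrote is correct, the invocation of Schoen's theorem to guarantee $b^y_\ell\geq 2$ is sound, and the final deduction that $\liminf genus(M_k)=1$ forces $M\simeq S^2$ is complete.
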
	
					
				Furthermore, we can complement the content of Corollary \ref{cor:3sphere} by giving a classification of the possible limit pictures describing sequences of minimal spheres of index not greater than one.	
				
				\begin{cor}\label{cor:3sphereBIS}
					Let $g$ be a Riemannian metric of positive scalar curvature on the three-dimensional sphere and let $\left\{M_k\right\}$ be a sequence of embedded minimal spheres of index one. Then: \textsl{either} a subsequence converges smoothly, with multiplicity one, to an embedded minimal sphere of index at most one \textsl{or} there exists a subsequence converging smoothly, with multiplicity two and exactly one catenoidal bubble, to a stable embedded minimal sphere. In particular, if $(S^3,g)$ does not contain any stable minimal sphere then the class of index one embedded minimal spheres is sequentially compact in the sense of smooth multiplicity one convergence. 
				\end{cor}	
				
				Indeed, if a subsequence converged with multiplicity $m\geq 2$ then by Theorem \ref{thm:lsc} the limit minimal surface should also be a sphere, all bubbles should have genus zero and equality holds. This information, combined with Lemma \ref{lem:sumindex} implies that $m=2$ and thus the conclusion follows from Remark \ref{rem:equal} above. In fact, in the setting of Corollary \ref{cor:3sphereBIS} the sole assumption of a uniform index bound allows to prove, invoking the classification theorem by Lopez and Ros \cite{LR91}, that when bubbling occurs there is subsequential convergence with multiplicity $m$ and exactly $m-1$ catenoidal bubbles (as in Remark \ref{rem:equal}).
					
				The proof of Theorem \ref{thm:lsc} crucially relies on a sharp multiplicity estimate, Proposition \ref{pro:mult}, which is presented in Section \ref{sec:ancill}. Roughly speaking, we gain (in the setting of Theorem \ref{thm:main}) an effective control on the integer $m$ in terms of the topological data of the bubbles, in fact only involving the number of their ends. Hence, one can also use this proposition to derive asymptotic area estimates for the sequence $\left\{M_k\right\}$ in terms of their index and of a lower bound on the scalar curvature of the ambient manifold $(N^3,g)$. This can be turned into a compactness theorem in the same spirit of Theorem \ref{thm:conv1} and Corollary \ref{cor:3sphere}.

			\begin{thm}\label{thm:conv1'}
				Let $(N^3,g)$ be a compact Riemannian manifold, without boundary, of scalar curvature bounded below by some constant $\rho>0$. For a fixed integer $j\in\mathbb{N}$ let $\left\{M_k\right\}$ be a sequence of closed, embedded minimal surfaces with Morse index bounded from above by $j$. If
				\[
				\limsup_{k\to\infty}\h^2(M_k)> \frac{8 \pi(1+j)}{\rho}
				\]
				then, up to extracting a subsequence, one has that $\left\{M_k\right\}$ converges smoothly to some closed, embedded minimal surface $M$, of Morse index bounded from above by $j$, with multiplicity one.
			\end{thm}
			
			This statement connects in particular with the study of minimal surfaces of index one obtained via one-dimensional min-max schemes, hence with the notion of width of a Riemannian manifold. If we assume, as a convenient normalization, that $(N^3,g)$ has scalar curvature bounded from below by $6$, then the threshold that is prescribed by the previous theorem for $j=1$ is $8\pi/3$, to be compared with the results in \cite{MN12} asserting that when $N^3$ is diffeomorphic to $S^3$ and satisfies additional geometric assumptions (see for instance Theorem 1.1 and Theorem 4.9 therein) the width is always bounded from above by $4\pi$, namely the value of the area of any equatorial two-sphere in the round three-dimensional sphere.

			\
			
			Analogous theorems can also be obtained in the case of free boundary minimal surfaces, which are the object of our article \cite{ABCS18}, following the general compactness analysis presented by Ambrozio, Carlotto and Sharp in \cite{ACS17}.
			
			\
			
			\textsl{Acknowledgements.} The authors would like to thank Andr\'e Neves for suggesting the question which inspired this research project. During the preparation of this article, L. A. was supported by the EPSRC on a Programme Grant entitled `Singularities of Geometric Partial Differential Equations' reference number EP/K00865X/1.
		 This project was completed while A. C. was a visiting professor at the Scuola Normale Superiore, and he would like to thank the faculty and staff members for the warm hospitality and excellent working conditions.
		 Some results have been sharpened in the revision process based on the improved index estimate in \cite{CM18b}, that appeared after a first version of this article had been submitted. We thank the anonymous referee for suggesting these revisions.
			
			\section{Some ancillary results}\label{sec:ancill}
			
			Let us start by reviewing the bubbling analysis presented in \cite{BS17}, whose main result (specified to the case of ambient dimension three) can be summarized as follows:

			\begin{thm}\label{thm:main}
				Let $(N^{3},g)$ be a smooth, compact Riemannian manifold without boundary. If $\{M_k\}\subset \mathfrak{M}_p(\Lambda,\mu)$ for some fixed constants $p\geq 1,\Lambda\in \R$, $\mu\in \R_{\geq 0}$ independent of $k$, then up to subsequence there exist $M\in \mathfrak{M}_p(\Lambda,\mu)$ and $m\in \mathbb{N}$ where $M_k \to m M$ in the varifold sense and a set $\mathcal{Y} = \{y_i\}\subset M$ of at most $p-1$ points such that the convergence to $M$ is smooth and graphical (with multiplicity $m$) away from $\mathcal{Y}$.

				Moreover, associated with each $y \in \mathcal{Y}$ there exists a finite number $0<J_y\in \mathbb{N}$ of bubbles $\{\Sigma_\ell^y\}_{\ell=1}^{J_y}$ with $\sum_y J_y \leq p-1$ as well as associated point-scale sequences $\{(p^{y,\ell}_k, r^{y,\ell}_k)\}_{\ell=1}^{J_y}$ with $p^{y,\ell}_k \to y$ for all $\ell$, and $r^{y,\ell}_k\to 0$, so that:
				
				\begin{enumerate}
				\item	For all $y\in\mathcal{Y}$ and $1\leq i\neq j\leq J_y$ 
						\[\frac{dist_g(p^{y,i}_k,p^{y,j}_k)}{r^{y,i}_k + r^{y,j}_k}\to \infty.
						\]
						Taking normal coordinates centred at $p^{y,\ell}_k$ and letting
						$\tilde{M}^{y,\ell}_k:=M_k/r^{y,\ell}_k \subset \R^{3}$ then $\tilde{M}^{y,\ell}_k$ converges smoothly on compact subsets to $\Sigma^y_\ell$ with multiplicity one.

						\noindent Moreover, given any other sequence $M_k \ni q_k$ and $\varrho_k \to 0$ with $q_k \to \overline{y}\in\mathcal{Y}$ and 
						\[\min_{y\in\mathcal{Y}}\min_{\ell=1,\dots,J_y} \Big(\frac{\varrho_k}{r^{y,\ell}_k} + \frac{r^{y,\ell}_k}{\varrho_k} + \frac{dist_g(q_k,p^{y,\ell}_k)}{\varrho_k + r^{y,\ell}_k}\Big)\to \infty,
						\]
						taking normal coordinates at $q_k$ and letting 
						$\hat{M}_k:= M_k/\varrho_k\subset \R^{3}$
						then $\hat{M}_k$ converges smoothly on compact subsets to a collection of parallel planes. 
					\item The following equation holds
					\[
					\lim_{k\to \infty} \A(M_k) = m\A(M) + \sum_{y\in \mathcal{Y}}\sum_{\ell=1}^{J_y} \A(\Sigma^y_\ell)
					\]
					where we have denoted by $\mathcal{A}(M)$ and $\mathcal{A}(M_k)$ the total curvature in $(N,g)$ of the minimal surfaces $M$ and $M_k$, respectively.
				\end{enumerate}
				Furthermore when $k$ is sufficiently large, the surfaces $M_k$ of this subsequence are all diffeomorphic to one another.
			\end{thm}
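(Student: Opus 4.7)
The plan is to run a concentration-compactness and bubble-tree extraction in the spirit of Sacks-Uhlenbeck, adapted to the setting of embedded minimal surfaces. The key inputs are: an index-to-curvature localization that converts the eigenvalue bound into control on a discrete concentration set; the standard smooth compactness for minimal surfaces with bounded second fundamental form and area; an iterative point-picking procedure producing bubbles at each concentration point; and a neck analysis guaranteeing no loss of area on scales intermediate between the background surface and the bubbles.

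First I would exploit the bound $\lambda_p(M_k)\geq -\mu$ to extract a set $\mathcal{Y}\subset M$ of at most $p-1$ points outside of which $|A_{M_k}|$ stays uniformly bounded. The mechanism is by now standard in this circle of ideas: if curvature concentrated at $p$ distinct points, one could build $p$ test functions with pairwise disjoint supports, each making the Jacobi quadratic form negative (after a logarithmic cut-off on the bubble core), contradicting the eigenvalue hypothesis; see \cite{Sha15, ACS15}. On $N\setminus\mathcal{Y}$ the uniform curvature estimate together with $\h^2(M_k)\leq\Lambda$ yields smooth graphical subsequential convergence to a smooth embedded minimal surface $M$ with some integer multiplicity $m$, and a direct variational argument shows that $M$ inherits the eigenvalue and area bounds, so $M\in\mathfrak{M}_p(\Lambda,\mu)$.

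At each $y\in\mathcal{Y}$ I would perform an iterative blow-up. Fix a small ball $B_\rho(y)$; choose $p_k^{y,1}\in M_k\cap B_\rho(y)$ realizing the maximum of $|A_{M_k}|$, set $r_k^{y,1}:=|A_{M_k}(p_k^{y,1})|^{-1}\to 0$, and rescale in normal coordinates centred at $p_k^{y,1}$. The limit $\Sigma_1^y$ is complete, embedded, non-flat, minimal in $\R^3$, and has bounded $L^2$ curvature on compact sets; a Choi-Schoen type argument combined with the global area bound upgrades this to $\mathcal{A}(\Sigma_1^y)<\infty$, so $\Sigma_1^y$ is a bubble. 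Iterating at strictly smaller scales produces further bubbles with the required separation $\mathrm{dist}_g(p_k^{y,i},p_k^{y,j})/(r_k^{y,i}+r_k^{y,j})\to\infty$ built into the selection; the process terminates in finitely many steps because every bubble consumes at least one ``index quantum'' by the same test-function construction now applied on the bubble scale, yielding $\sum_y J_y\leq p-1$.

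The main obstacle, and the technical heart of the proof, is the no-neck analysis underlying the area quantization in item (2). One must show that on each annular region between consecutive scales (or between $r_k^{y,1}$ and a fixed ball around $y$), the rescaled surfaces look in the limit like a bounded number of flat parallel planes carrying no extra $L^2$ curvature. This is proved by contradiction: any residual concentration on an intermediate scale would, via a new point-picking, generate another bubble, contradicting termination of the extraction; the second half of item (1), describing the unique parallel-plane limit at ``unmarked'' point-scales $(q_k,\varrho_k)$, is precisely this no-neck statement. Once this is in place, the quantization identity follows by decomposing $M_k$ into its graphical piece over $M\setminus\mathcal{Y}$ (contributing $m\mathcal{A}(M)$), the bubble regions at scales $\sim r_k^{y,\ell}$ (contributing $\sum_{y,\ell}\mathcal{A}(\Sigma_\ell^y)$), and the intermediate necks (contributing nothing). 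Finally, the eventual mutual diffeomorphism of the $M_k$ is read off from the stabilized bubble-tree data: for $k$ large, $M_k$ is topologically reconstructed from $M$ by removing $m$ small discs at each $y\in\mathcal{Y}$ and attaching truncated copies of the bubbles through their ends, and this combinatorial attachment is independent of $k$ along the extracted subsequence.
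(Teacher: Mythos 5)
The paper does not prove Theorem \ref{thm:main}: it is presented explicitly as a summary of the main result of Buzano--Sharp \cite{BS17}, specialised to ambient dimension three, with the index/eigenvalue localisation imported from \cite{Sha15, ACS15}. There is therefore no in-paper proof to compare your attempt against; the theorem is cited wholesale as background for what follows.

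With that caveat, your sketch does correctly identify the architecture of the actual argument in those references: disjoint-support test functions converting the $\lambda_p$ bound into a bound $|\mathcal{Y}|\le p-1$; curvature estimates and $\varepsilon$-regularity giving smooth graphical convergence off $\mathcal{Y}$; iterative point-picking and rescaling to extract bubbles; a no-neck analysis on intermediate annular scales; and quantization plus the eventual diffeomorphism from the stabilised bubble-tree data. A few steps, though, would not survive as written. The naive point-picking (``$p_k$ maximises $|A|$ on $B_\rho(y)$, set $r_k=|A(p_k)|^{-1}$'') does not by itself give uniform curvature bounds for the rescaled $\tilde M_k$ on compact subsets of $\R^3$, since $|A|$ can still blow up away from the chosen centre after rescaling; the standard remedy, used in \cite{BS17}, is to maximise a weighted quantity such as $|A|(x)\,\mathrm{dist}(x,\partial B)$ (and, at later stages, a quantity penalising proximity to previously chosen centres and scales) so the chosen point is an almost-maximum at its own scale. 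Finiteness of the bubble's total curvature is not a ``Choi--Schoen type'' statement: it comes either from the inherited finite index via Fischer-Colbrie \cite{FC85}, or from quadratic area growth (monotonicity plus $\h^2(M_k)\le\Lambda$) together with Osserman's theorem \cite{Oss63, Oss64}. And the no-neck step --- showing that any unmarked point-scale $(q_k,\varrho_k)$ sees only parallel planes and carries no residual $L^2$ curvature --- is the genuine technical core; your contradiction argument (``residual concentration produces another bubble, contradicting termination'') presupposes a termination criterion strong enough to rule out scales that are neither bubble scales nor graphical scales, and establishing that is precisely the delicate annular/neck analysis of Section 4 of \cite{BS17}. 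As a roadmap your proposal is faithful; as a proof it delegates essentially all of the hard content to the cited works.
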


			\begin{rmk}
				Since patently $\mathfrak{M}(\Lambda,I)= \mathfrak{M}_{I+1}(\Lambda,0)$ the previous assertion also implies an analogous result for the space of minimal surfaces in $(N^3,g)$ with a uniform upper bound on the area and on the Morse index.	
			\end{rmk}	
			
			As stated in the introduction, one can then easily derive from the previous result equation \eqref{eq:geomquant}, which will turn out to be very useful for the scopes of the present paper. To avoid ambiguities let us briefly recall some elementary facts and our conventions. As it is well known, the topology of a compact orientable surface is completely described by its genus; for compact non-orientable surfaces, we define their genus to be equal to the genus of their orientable double cover. According to that convention, the Euler characteristic of a compact surface $M$ of genus $\gamma$ is given by $\chi(M)=2-2\gamma$ if $M$ is orientable, and $\chi(M)=1-\gamma$ otherwise.
			Also, it is well-known (see \cite{Oss63, Oss64}) that bubbles are orientable and have finite topology, namely they are homeomorphic to a compact orientable surface of genus $\gamma$ minus a finite number $b$ of points (corresponding to the ends of $\Sigma$), and their Euler characteristic is given by $\chi(\Sigma)=2-2\gamma-b$. 
			
			\begin{cor}\label{lem:geomquant}
				(Setting as in Theorem \ref{thm:main}). There exists an infinite subset $K\subset\mathbb{N}$ depending on the ambient manifold $(N^3,g)$ and on the sequence $\left\{M_k\right\}$ such that for any $k\in K$ one has
				\[
				\chi(M_k)=m\chi (M)+\sum_{y\in\mathcal{Y}}\sum_{\ell=1}^{J_y} (\chi(\Sigma^y_{\ell})-b^y_{\ell})
				\]
				where $b^y_{\ell}$ is the number of ends of $\Sigma^y_{\ell}$. 
			\end{cor}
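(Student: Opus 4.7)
The plan is to combine the Gauss--Bonnet theorem with the Gauss equation for minimal surfaces and the quantization identity (2) of Theorem \ref{thm:main}, so that all the ambient curvature terms cancel and only topological data remain.

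First, I would write the Gauss equation for a minimal surface $\Sigma^2\hookrightarrow (N^3,g)$, which in this codimension reduces to $K_{\Sigma}=\ov{K}(T_p\Sigma)-\tfrac{1}{2}|A|^2$, where $\ov{K}$ denotes the ambient sectional curvature of $(N,g)$ along the tangent plane. Combined with Gauss--Bonnet this yields, for any closed minimal surface,
\[
4\pi\chi(\Sigma)=2\!\int_{\Sigma}\ov{K}(T_p\Sigma)\,d\h^2-\mathcal{A}(\Sigma).
\]
Applied to $M_k$ and $M$, this translates the quantization of total curvature into a statement about Euler characteristics provided one can handle both the ambient-curvature integrals and the bubble contributions.

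Next, I would control the bubble terms using Osserman's theorem together with the Jorge--Meeks formula: each bubble $\Sigma^y_\ell$ is conformal to a compact Riemann surface of genus $\gamma^y_\ell$ minus $b^y_\ell$ embedded ends, so that $\chi(\Sigma^y_\ell)=2-2\gamma^y_\ell-b^y_\ell$, and its Gauss map has degree $\gamma^y_\ell-1+b^y_\ell$. Since a minimal surface in $\R^3$ satisfies $K=-\tfrac12|A|^2$, a direct computation gives
\[
\mathcal{A}(\Sigma^y_\ell)=-4\pi\bigl(\chi(\Sigma^y_\ell)-b^y_\ell\bigr).
\]
The other ingredient I need is that $\int_{M_k}\ov{K}(T_pM_k)\,d\h^2\to m\!\int_M \ov{K}(T_pM)\,d\h^2$. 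This is immediate from the varifold convergence $M_k\to mM$ applied to the continuous function $\ov{K}$ on the Grassmann 2-plane bundle of $N$ (which is bounded since $N$ is compact).

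Putting everything together, the Gauss equation identities for $M_k$ and $M$, the formula above for the bubbles, and the quantization identity (2) of Theorem \ref{thm:main} force all terms involving $\int \ov{K}$ to cancel, leaving
\[
\lim_{k\to\infty}\chi(M_k)=m\chi(M)+\sum_{y\in\mathcal{Y}}\sum_{\ell=1}^{J_y}\bigl(\chi(\Sigma^y_\ell)-b^y_\ell\bigr).
\]
Finally, by the last sentence of Theorem \ref{thm:main}, the surfaces $M_k$ in the extracted subsequence are mutually diffeomorphic for all sufficiently large $k$, so $\chi(M_k)$ is eventually constant and the limit above is achieved identically on an infinite subset $K\subset\mathbb{N}$. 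The only mildly delicate point, which I would verify carefully, is the Jorge--Meeks count of the Gauss map degree for embedded ends (hence the cancellation above); apart from this, the argument is purely bookkeeping.
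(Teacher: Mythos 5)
Your proposal is correct and follows essentially the same route as the paper: Gauss equation plus Gauss--Bonnet on $M$, $M_k$, and the bubbles (via Jorge--Meeks for $\mathcal{A}(\Sigma^y_\ell)=-4\pi(\chi(\Sigma^y_\ell)-b^y_\ell)$), together with varifold convergence to pass the ambient sectional-curvature integrals to the limit, which turns the quantization identity into the Euler-characteristic identity. Your closing observation that $\chi(M_k)$ is eventually constant because the $M_k$ are eventually mutually diffeomorphic is precisely the (implicit) justification for the ``infinite subset $K$'' in the statement, so the argument is complete.
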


			\begin{proof}
				The Gauss equation for the minimal surface $M$ in the ambient manifold $(N^3,g)$ takes the form $|A|^2=2 \textrm{Sec}_{TM}-2K$ where $K$ stands for the Gauss curvature of the surface in question and $\textrm{Sec}_{TM}$ denotes the sectional curvature of $(N^3,g)$ along the plane spanned by a pair of linearly independent tangent vectors to $M$. Thus, integrating this equation, we get by virtue of the Gauss-Bonnet theorem
				\[
				\mathcal{A}(M)=- 4\pi\chi(M)+2\int_M \textrm{Sec}(TM)\,d\h^2;
				\]
				similarly for each minimal surface $M_k$
				\[
				\mathcal{A}(M_k)=-4\pi\chi(M_k)+2\int_{M_k} \textrm{Sec}(TM_k)\,d\h^2;
				\]
				and also 
				$
				\mathcal{A}(\Sigma)=-4\pi(\chi(\Sigma^y_\ell)-b^y_{\ell})
				$
				for each bubble $\Sigma^y_{\ell}\subset\R^3$  (cf. \cite{JM83}).
				Thus, since clearly the varifold convergence $M_k\to m M$ implies
				\[
				\int_{M_k} \textrm{Sec}(TM_k)\,d\h^2\to m \int_M \textrm{Sec}(TM)\,d\h^2,
				\]
				the equation \eqref{eq:quant} can be rewritten in the form
				\[
				\lim_{k\to\infty} \chi(M_k)=m\chi(M)+\sum_{y\in \mathcal{Y}}\sum_{\ell=1}^{J_y}(\chi(\Sigma^y_\ell)-b^y_{\ell})
				\]
				which implies the claim.
			\end{proof}
			
			For the proof of Lemma \ref{lem:sumindex}, we need two preliminary observations.
			
			\
			
				We first recall from \cite{FC85} and \cite{Sc83} that an embedded minimal surface of finite total curvature $\Sigma\subset\R^3$ has finite Morse index (in fact these two finiteness conditions are equivalent) and is \textsl{regular at infinity} meaning that it can be decomposed, outside a compact set of $\R^3$ as a finite union of graphs with a suitable asymptotic expansion. In particular, one can certainly find $R>0$ large enough that
				\[
			index(\Sigma \cap B_R(0))=index(\Sigma) \,\,\,\,\,\text{and} \,\,\,\,\,\, 	genus(\Sigma \cap B_R(0))=genus(\Sigma).
				\]
			
			Furthermore, it follows from part $(1)$ of the statement of Theorem \ref{thm:main} that for all $R>0$, there exists $k_0$ such that for all $k>k_0$, the set 
				\[
				\{B^{(N,g)}_{R r^{y,\ell}_k}(p^{y,\ell}_k)\}^{J_y}_{\ell=1}
				\] consists of pairwise disjoint balls. Indeed, if that were not the case there would exist $R_0$, indices $1\leq i\neq j\leq J_y$ and an infinite subset $K\subset\mathbb{N}$ so that for all $k\in K$
				\[
				B^{(N,g)}_{R_0 r^{y,i}_k}(p^{y,i}_k)\cap B^{(N,g)}_{R_0 r^{y,j}_k}(p^{y,j}_k)\neq \emptyset
				\]
			hence
				\[\frac{dist_g(p^{y,i}_k,p^{y,j}_k)}{r^{y,i}_k + r^{y,j}_k} \leq 2R_0
				\]
			which is a contradiction.

			\begin{lem}\label{lem:sumindex}

				(Setting as in Theorem \ref{thm:main}). 
				\begin{equation}\label{eq;sumindex}
				\sum_{y\in\mathcal{Y}}\sum_{\ell=1}^{J_y} index(\Sigma^y_{\ell})\leq \liminf_{k\to\infty} index(M_k).
				\end{equation}
			\end{lem}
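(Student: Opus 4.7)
The plan is to construct, for each sufficiently large $k$, a linear subspace of compactly supported test functions on $M_k$ whose dimension equals $\sum_{y,\ell} index(\Sigma^y_\ell)$ and on which the Jacobi quadratic form of $M_k$ is negative definite. Since the Morse index of $M_k$ is by definition the maximum of the dimensions of such subspaces, this immediately yields the desired inequality after passing to the $\liminf$.

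First I would use the two preliminary observations to set up the geometry. For each bubble $\Sigma^y_\ell$, its regularity at infinity ensures that there exists $R=R(y,\ell)>0$ so large that $index(\Sigma^y_\ell\cap B_R(0))=index(\Sigma^y_\ell)=:I^{y,\ell}$; pick $I^{y,\ell}$ smooth, compactly supported functions $\phi^{y,\ell}_1,\ldots,\phi^{y,\ell}_{I^{y,\ell}}$ on $\Sigma^y_\ell\cap B_R(0)$ spanning a subspace on which the Euclidean Jacobi form of $\Sigma^y_\ell$ is negative definite. By enlarging $R$ finitely many times we may assume the same $R$ works for every $(y,\ell)$. By the second preliminary observation, the balls $B^{(N,g)}_{R r^{y,\ell}_k}(p^{y,\ell}_k)$ are pairwise disjoint for fixed $y$ once $k$ is large enough, and trivially balls associated to distinct points of $\mathcal{Y}$ are disjoint for $k$ large since $p^{y,\ell}_k\to y$ and $r^{y,\ell}_k\to 0$.

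The key step is the rescaling argument. Working in normal coordinates centred at $p^{y,\ell}_k$ and rescaling by $1/r^{y,\ell}_k$, the pulled-back metrics $g_k^{y,\ell}:=(r^{y,\ell}_k)^{-2}g$ converge in $C^\infty_{\mathrm{loc}}(\R^3)$ to the Euclidean metric, so their Ricci tensors converge to zero on $B_R(0)$. Simultaneously, part (1) of Theorem \ref{thm:main} guarantees that $\tilde{M}^{y,\ell}_k\to \Sigma^y_\ell$ smoothly with multiplicity one on $B_R(0)$, so the induced metrics and the squared norms of the second fundamental forms of $\tilde{M}^{y,\ell}_k$ converge to those of $\Sigma^y_\ell$. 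Transplanting each $\phi^{y,\ell}_j$ to $\tilde{M}^{y,\ell}_k$ via the graphical parametrisation over $\Sigma^y_\ell\cap B_R(0)$, the corresponding Jacobi form on $\tilde{M}^{y,\ell}_k$, computed with respect to $g_k^{y,\ell}$, converges to the Euclidean Jacobi form of $\Sigma^y_\ell$ evaluated on $\phi^{y,\ell}_j$; in particular, for all sufficiently large $k$, the transplanted family spans an $I^{y,\ell}$-dimensional subspace on which the rescaled Jacobi form is negative definite. Since the Morse index is invariant under simultaneous rescaling of the ambient metric and the surface, this produces, on $M_k\cap B^{(N,g)}_{R r^{y,\ell}_k}(p^{y,\ell}_k)$, an $I^{y,\ell}$-dimensional space of compactly supported functions on which the Jacobi form of $M_k$ (in $(N,g)$) is negative definite.

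Finally, since the balls $B^{(N,g)}_{R r^{y,\ell}_k}(p^{y,\ell}_k)$ are pairwise disjoint for $k$ large, the transplanted test functions associated to different pairs $(y,\ell)$ have disjoint supports and are therefore linearly independent in $C^\infty_c(M_k)$, and the Jacobi form of $M_k$ is negative definite on their span (a cross term between functions with disjoint supports vanishes). Hence, for all $k$ large enough,
\[
index(M_k)\geq \sum_{y\in\mathcal{Y}}\sum_{\ell=1}^{J_y} I^{y,\ell}=\sum_{y\in\mathcal{Y}}\sum_{\ell=1}^{J_y} index(\Sigma^y_\ell),
\]
and taking the $\liminf$ yields \eqref{eq;sumindex}. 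The main technical point is the rescaling argument, which requires tracking how curvature terms behave under the blow-up: the ambient $Ric(\nu,\nu)$ contribution vanishes in the limit while $|A|^2$ is preserved, so the limiting quadratic form is precisely the Euclidean Jacobi form of $\Sigma^y_\ell$, allowing one to propagate the strict negativity from the bubble down to $M_k$.
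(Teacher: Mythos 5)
Your proposal follows essentially the same route as the paper: fix $R$ so the bubbles' indices stabilize in $B_R(0)$, observe that the rescaled balls $B^{(N,g)}_{R r^{y,\ell}_k}(p^{y,\ell}_k)$ are pairwise disjoint for large $k$, use the smooth multiplicity-one convergence of the rescaled surfaces to transfer negative directions from each bubble to $M_k$, and then sum over the disjoint bubble regions. The paper merely asserts the key inequality $index(M_k \cap B^{(N,g)}_{R r^{y,\ell}_k}(p^{y,\ell}_k))\geq index(\Sigma^y_\ell\cap B_R(0))$ ``relies on the smooth convergence, multiplicity one''; your transplanting of test functions together with convergence of the rescaled metrics and second fundamental forms is precisely the standard way to justify that assertion, so you have written out the same proof in slightly more detail.
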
	
			
			\begin{proof} 
				Let us consider, without loss of generality, an infinite subset $K\subset\mathbb{N}$ such that if $\underline{k}\in K$ then
			$index(M_{\underline{k}})=\liminf_{k\to\infty}index(M_k)$.
				Let $R>0$ be chosen once and for all, based on the remarks we have presented before the statement of this lemma, so that 
				\[
				index(\Sigma^y_{\ell} \cap B_R(0))=index(\Sigma^y_{\ell})
				\]
				for all $y\in\mathcal{Y}, 1\leq \ell\leq J_y$, namely for each one of the finitely many bubbles mentioned in the statement of Theorem \ref{thm:main}, applied to the subsequence of minimal surfaces $\left\{M_k\right\}_{k\in K}$. Then for such a choice of $R$ we have that for $k$ belonging to a further subsequence (so $k\in K'\subset K\subset \mathbb{N}$)
				\[
				index (M_k \cap B^{(N,g)}_{R r^{y,\ell}_k}(p^{y,\ell}_k))= index (\tilde{M}^{y,\ell}_k \cap B^{(\tilde{N},\tilde{g})}_R(0)) \geq index(\Sigma^y_\ell\cap B_R(0)) = index(\Sigma^y_\ell)
				\]
				where the inequality relies on the smooth convergence, multiplicity one, of $\tilde{M}^{y,{\ell}}_k$ to $\Sigma^y_{\ell}$.
				Here we have denoted by $(\tilde{N},\tilde{g})$ the (locally defined) smooth manifold which is obtained by scaling $(N,g)$ by a factor $r^{y,\ell}_k$, where the operation is understood in normal coordinates centered at the point $p^{y,\ell}_k$ (the explicit dependence on the scaling parameter is omitted for notational convenience).
				
				On the other hand, by virtue of the remarks we presented before the statement of this lemma, for all $k\in K'$ large enough
				\[
				index(M_k)\geq \sum_{y\in\mathcal{Y}} \sum_{\ell=1}^{J_y}  index (M_k \cap B^{(N,g)}_{R r^{y,\ell}_k}(p^{y,\ell}_k)),
				\]
				which completes the proof.
			\end{proof}

			The same argument presented above for Lemma \ref{lem:sumindex}, this time relying on the elementary inequality 
			\[
		genus(M_k)\geq \sum_{y\in\mathcal{Y}} \sum_{\ell=1}^{J_y}  genus (M_k \cap B^{(N,g)}_{R r^{y,\ell}_k}(p^{y,\ell}_k)),
		\]
				which holds true since the open balls in question are disjoint, shows that 
			\begin{equation}\label{eq;sumgenus}
			\sum_{y\in\mathcal{Y}}\sum_{\ell=1}^{J_y} genus(\Sigma^y_{\ell})\leq \liminf_{k\to\infty} genus(M_k).
			\end{equation}

			Our goal however is to prove a much stronger statement, Theorem \ref{thm:lsc}, which we can do at the cost of a more delicate proof. 
			To that end, the key step is the following multiplicity estimate (of independent interest and applicability). 
			To state it in a concise fashion, we need to remind the reader of the construction of a twofold cover of a one-sided minimal surface (see e.~g.\! Section 6 of \cite{ACS17}). 
			
			If we let $f:\tilde{M}\to N$
			be the two-sided minimal immersion associated to $M$, we consider the associated 
			pulled-back bundle, $f^*NM$, which is trivial (by definition of $\tilde{M}$) and whose zero section describes $\tilde{M}$. A sufficiently small neighbourhood of the zero section, denoted $\tilde{U}$, is in a two-to-one correspondence with a small tubular neighbourhood $U$ of the one-sided surface $M$ in $N^3$. Therefore, we may pull back the metric on $U$ and see $\tilde{M}\emb\tilde{U}$ as a two-sided embedded minimal surface. For large enough $k$ (so that eventually $M_k$ lies inside this tubular neighbourhood of $M$) we can equally consider the pull back of $M_k$, denoted $\tilde{M}_k\emb \tilde{U}$ which is again an embedded minimal surface (possibly disconnected, but with at most two components). Nevertheless, we still have $\tilde{M}_k \to m\tilde{M}$ locally smoothly and graphically on $\tilde{M}\sm\tilde{\mathcal{Y}}$ with $|\tilde{\mathcal{Y}}|=2|\mathcal{Y}|$, and we have two copies of each of the original bubbles appearing in the convergence in $\tilde{U}$.

			\begin{prop}\label{pro:mult}
				(Setting as in Theorem \ref{thm:main}).
				\begin{enumerate}
					\item If $M$ is two-sided then 
					$$m\leq 1 + \sum_{y\in\mathcal{Y}}\sum_{\ell=1}^{J_y} (b^y_\ell - 1).$$
					\item If $M$ is one-sided then  
					\begin{itemize}
						\item when $\tilde{M}_k$ is connected then
						$$m\leq 1 + 2\sum_{y\in\mathcal{Y}}\sum_{\ell=1}^{J_y} (b^y_\ell - 1).$$
						\item when $\tilde{M}_k$ is not connected then 
						$$m\leq 2 + 2\sum_{y\in\mathcal{Y}}\sum_{\ell=1}^{J_y} (b^y_\ell - 1).$$
					\end{itemize}
					\end{enumerate}
				\end{prop}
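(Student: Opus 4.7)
The plan is to count connected components via a combinatorial graph argument that encodes how the bubbles glue together the $m$ approximate sheets arising in the convergence $M_k\to mM$.

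First, using Schoen's regularity at infinity \cite{Sc83}, I fix $R>0$ large enough that for every bubble $\Sigma^y_\ell$ the truncation $\Sigma^y_\ell\cap B_R(0)$ is connected and bounded by exactly $b^y_\ell$ nearly-planar graphical circles. I then choose intermediate radii $\varrho^y_k$ satisfying $R\max_\ell r^{y,\ell}_k \ll \varrho^y_k \ll 1$. The no-neck statement (second half of Theorem~\ref{thm:main}(1)), applied at every scale strictly between $r^{y,\ell}_k$ and $\varrho^y_k$, forces the annular region $M_k\cap\bigl(B_{\varrho^y_k}(y)\setminus\bigcup_\ell B_{R r^{y,\ell}_k}(p^{y,\ell}_k)\bigr)$ to decompose, for $k$ large, into a disjoint union of thin annular tubes --- one per end of each bubble. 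Simultaneously, the smooth multiplicity-$m$ graphical convergence implies that the outer region $M_k\cap\bigl(N\setminus\bigcup_y B_{\varrho^y_k}(y)\bigr)$ consists of exactly $m$ connected pieces (copies of $M$ with $|\mathcal{Y}|$ small disks removed).

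In the two-sided case I then encode this structure in a finite graph $\mathcal{G}$ whose vertices are (a) the $m$ outer sheets and (b) the $J:=\sum_y J_y$ bubble cores $M_k\cap B_{R r^{y,\ell}_k}(p^{y,\ell}_k)$, and whose edges are the annular tubes constructed above. By design, the connected components of $\mathcal{G}$ biject with those of $M_k$; counting gives $V=m+J$ vertices and $E=\sum_y\sum_\ell b^y_\ell$ edges. Because $M_k$ is connected, the standard inequality $E\geq V-1$ rearranges at once into $m\leq 1+\sum_y\sum_\ell (b^y_\ell-1)$. For the one-sided case I pass to the two-sided double cover $(\tilde M,\tilde M_k,\tilde U)$ recalled just above the statement and repeat the construction verbatim: the doubled graph $\tilde{\mathcal{G}}$ has $m+2J$ vertices, $2\sum_y\sum_\ell b^y_\ell$ edges, and precisely $c\in\{1,2\}$ components according to whether $\tilde M_k$ is connected or not, yielding $m\leq c+2\sum_y\sum_\ell (b^y_\ell-1)$ as required.

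The hard part will be establishing the annular-tube decomposition rigorously. One must rule out any additional curvature concentration in the intermediate annular region and verify that the parallel-plane limits at every scale $r^{y,\ell}_k \ll \varrho \ll \varrho^y_k$ genuinely glue into embedded annuli (as opposed to subsurfaces of more complicated topology). This is precisely where the no-neck part of Theorem~\ref{thm:main}(1) --- valid at \emph{all} intermediate scales and for \emph{any} sequence of centres escaping the bubble scales --- is essential; combined with a standard continuity/connectedness argument for graphical sheets over annuli of parallel planes, this identifies the tubes one-per-end and closes the proof.
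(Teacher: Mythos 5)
Your graph-counting argument is correct and yields exactly the bounds in the statement, but the route is genuinely different from the paper's. The paper's proof (using two-sidedness to order the $m$ sheets from lowest to highest with respect to a unit normal) builds an explicit \emph{path} inside $M_k$: start on the lowest sheet, enter a bubble region, climb the almost-bubble to its topmost end (thereby gaining at most $b_i-1$ sheets), exit, and repeat on a disjoint bubble region until the top sheet is reached; summing the gains gives $m\leq b_1 + (b_2-1)+\cdots+(b_I-1) \leq 1 + \sum_{y,\ell}(b^y_\ell-1)$. Your version packages the same geometric decomposition of $M_k$ (into $m$ outer graphical sheets, $J=\sum_y J_y$ bubble cores, and neck annuli) into an abstract connected graph $\mathcal{G}$, and then closes in one line with the Euler inequality $E\geq V-1$. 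Both arguments rest on the same analytic input — the neck analysis of Section~4 of \cite{BS17}, which the paper cites at precisely the place where you flag "the hard part" — and both ultimately use nothing beyond connectedness of $M_k$, but your reformulation cleanly separates the hard analysis (the decomposition) from the combinatorics, and it dispenses with the ordering of sheets (you still need the double cover in the one-sided case, as does the paper, because without two-sidedness the outer region of $M_k$ need not split into $m$ pieces). One small caveat worth recording: in a bubble-tree configuration a single neck annulus may connect two bubble ends directly rather than joining a bubble end to an outer sheet, so "one tube per end" can overcount; this only helps you, since the true edge count satisfies $E'\leq\sum_{y,\ell}b^y_\ell$ and connectedness gives $m+J-1\leq E'\leq\sum_{y,\ell}b^y_\ell$, which is the desired inequality.
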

			
			\begin{proof}
				We will only prove part $(1)$, as case $(2)$ can be established by a simple variation of the same argument.
				
				\
				
				The conclusion is trivial if the convergence happens with multiplicity $m=1$ (for, in this case, we already know from \cite{Sha15} that $\mathcal{Y}=\emptyset$ and thus there are no bubbles at all), so let us assume instead $m\geq 2$, which implies that $|\mathcal{Y}|>0$ because $M$ is assumed to be two-sided.
				
				 By the neck analysis presented in Section 4 of \cite{BS17} and in particular claim 1 therein, we see that, for any $R$ fixed and sufficiently large, there exists $k_0\in\mathbb{N}$ such that whenever $k\in K, k\geq k_0$ each connected component of
				 \[
				 M_k \sm \bigcup_{y\in\mathcal{Y}}\bigcup_{\ell =1}^{J_y} B^{(N,g)}_{R r^{y,\ell}_k}(p^{y,\ell}_k)
				 \]
				 is a normal graph over some region in $M$ (with suitable estimates). Here we have conveniently denoted by $K\subset\mathbb{N}$ an infinite subset such that the convergence of $\left\{M_k\right\}$ to $M$ as we let $k\to\infty, k\in K$ satisfies all of the conclusions of Theorem \ref{thm:main}.

				\
				
				 Furthermore, by picking normal coordinates centred at $p^{y,\ell}_k$, one has that the intersection 
				 \begin{equation}\label{eq:intersect}
				 M_k\cap  B^{(N,g)}_{R r^{y,\ell}_k}(p^{y,\ell}_k)
				 \end{equation} is as close as we like to the blown-down bubble $R r^{y,\ell}_k \Sigma^y_\ell$. For the sake of brevity, we shall refer to
				 \[
				 B^{(N,g)}_{R r^{y,\ell}_k}(p^{y,\ell}_k)
				 \] as \textsl{bubble region} and to the intersection \eqref{eq:intersect} as an \textsl{almost bubble}, and notice that we can refer to its ``ends'' in the same way that we would do for $\Sigma^y_\ell$.

				Since $M$ is two-sided, we can order the sheets (the images of the aforementioned graphs) from the lowest to highest (after choosing a unit normal), and the idea of the proof is to construct a path in $M_k$ passing from one sheet to the next via the almost bubbles. 
				From here on, we can assume $k\in K, k\geq k_0$ and fixed (the same argument continues to work for all larger $k$).

				Start at a point in the lowest sheet and move (staying within this graph) until we enter a bubble region which we re-label $B^{(N,g)}_{r_1}(p^1)$ (this must happen, otherwise $M_k$ is disconnected). 
				Once inside the bubble region, move up the almost bubble till we reach the highest  end, then move outside of the bubble region. Notice that at this point we have moved up to the $(b_1)^{th}$ sheet.   
					Now we face a dichotomy:
				\begin{enumerate}
					\item[\textsl{(i)}] \textsl{\underline{either}} we have reached the top sheet,
					\item [\textsl{(ii)}] \textsl{\underline{or}} we have not reached the top sheet. 
				\end{enumerate}
				In case $(i)$, then the process stops here. If $(ii)$ we move on the sheet we are on, until we reach another bubble region $B^{(N,g)}_{r_2}(p^2)$ for which both of the following assertions hold: 
				\begin{itemize}
					\item $B^{(N,g)}_{r_1}(p^1)\cap B^{(N,g)}_{r_2}(p^2)=\emptyset$ and
					\item we are not on the top sheet of the almost bubble inside $B^{(N,g)}_{r_2}(p^2)$. 
				\end{itemize}
				There must be another bubble region satisfying the above, otherwise $M_k$ would be disconnected. 
				Once inside the second bubble region satisfying the above, we again move upwards until we are on the top sheet of the second almost bubble. Notice that this corresponds to moving up \emph{at most} $b_2 -1$ further sheets. Again, move outside of the bubble region and face the dichotomy. 
				
				Continue inductively until we satisfy $(i)$ in the dichotomy. This gives a collection of disjoint bubble regions that we have moved through $\{B^{(N,g)}_{r_i}(p^i)\}_{i=1}^I$, and furthermore 
				\[m\leq b_1 + (b_2 -1) + \dots + (b_I -1).
				\]
				Thereby the proof is complete. 
			\end{proof}
			
			We proceed with a simple result characterizing stable, but possibly one-sided, minimal surfaces \emph{arising as limits} inside 3-manifolds of positive scalar curvature.

			\begin{lem}\label{lem:stable-type}
				Let $(N^3,g)$ be a compact Riemannian manifold, without boundary, of positive scalar curvature. Let $M\subset N^3$ be a closed, embedded minimal surface arising as the limit (in the sense of smooth convergence with multiplicity $m\geq 2$ away from a finite set $\mathcal{Y}$ of points) of a sequence of closed embedded minimal surfaces in $(N^3,g)$. Then $M$ is diffeomorphic to either $S^2$ or $\R\mathbb{P}^2$. More precisely:
				\begin{enumerate}
					\item{if $M$ is two-sided, then $M\simeq S^2,\R\mathbb{P}^2$;}
					\item{if $M$ is one-sided, then $M\simeq \R\mathbb{P}^2$.}	
				\end{enumerate}	Set $\rho:=\inf R_g$ then we have the following area bounds:
				\begin{itemize}
					\item{if $M\simeq S^2$, then $\h^2(M)\leq 8\pi/\rho;$}
					\item{if $M\simeq \R\mathbb{P}^2$, then $\h^2(M)\leq 4\pi/\rho$.}
					\end{itemize}
			\end{lem}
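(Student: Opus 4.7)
The plan is to first observe that a limit with multiplicity $m\geq 2$ must be stable (on its two-sided double cover when $M$ is one-sided) and then combine the stability inequality with the Gauss equation and Gauss-Bonnet in the presence of positive scalar curvature.

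First I would construct a positive Jacobi field on the relevant surface. Pick a point $p\in M\sm\mathcal{Y}$ of smooth graphical convergence; on a small neighbourhood $\Omega\subset M$ of $p$ the sequence $M_k$ is represented (in normal exponential coordinates) as a union of $m$ ordered graphs $u_k^1<\cdots<u_k^m$ over $M$. Setting $v_k:=(u_k^m-u_k^1)/\|u_k^m-u_k^1\|_{L^\infty(\Omega)}$ and applying standard elliptic theory to the linearization of the minimal surface equation, I obtain up to subsequence a non-negative, bounded, weak (hence smooth) Jacobi field $v$ on $M\sm\mathcal{Y}$; a standard removability result for bounded solutions across the isolated singularities of the Jacobi operator extends $v$ to all of $M$, and the Harnack inequality forces $v>0$. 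The existence of a strictly positive Jacobi field on the closed surface $M$ is equivalent to $M$ being stable. In case (2), the construction recalled in the excerpt right before Proposition \ref{pro:mult} provides $\tilde{M}_k\to m\tilde{M}$ smoothly and graphically on $\tilde{M}\sm\tilde{\mathcal{Y}}$ with the same multiplicity $m\geq 2$, and the very same argument applied to $\tilde{M}\subset\tilde{N}$ yields stability of $\tilde{M}$ as a two-sided minimal surface in the pulled-back manifold.

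Second, I would test the stability inequality with $\phi\equiv 1$ on $\Sigma$ (with $\Sigma=M$ in case (1) and $\Sigma=\tilde{M}$ in case (2)) to get $\int_\Sigma(|A|^2+\mathrm{Ric}(N,N))\,d\h^2\leq 0$. The Gauss equation for a minimal two-surface in a three-manifold yields $|A|^2+2\,\mathrm{Ric}(N,N)=R_g-2K_\Sigma$, so the stability inequality rewrites as
\[
0\,\geq\,\int_\Sigma\Big(\tfrac{1}{2}|A|^2+\tfrac{1}{2}R_g-K_\Sigma\Big)\,d\h^2,
\]
and coupling this with Gauss-Bonnet produces
\[
2\pi\,\chi(\Sigma)\,=\,\int_\Sigma K_\Sigma\,d\h^2\,\geq\,\tfrac{\rho}{2}\,\h^2(\Sigma)\,>\,0.
\]

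Finally, I would read off the conclusion. In case (1), $\chi(M)>0$ for a closed surface together with the paper's genus convention forces $M\simeq S^2$ or $M\simeq \R\mathbb{P}^2$, and $\h^2(M)\leq 4\pi\chi(M)/\rho$ produces the two asserted bounds $8\pi/\rho$ and $4\pi/\rho$ respectively. In case (2), $\chi(\tilde{M})>0$ and orientability of the double cover force $\tilde{M}\simeq S^2$, hence $M\simeq\R\mathbb{P}^2$; then $\h^2(\tilde{M})\leq 8\pi/\rho$ together with $\h^2(\tilde{M})=2\,\h^2(M)$ yields $\h^2(M)\leq 4\pi/\rho$, as claimed. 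The only real subtlety is the production of a positive Jacobi field on $\tilde{M}$ in the one-sided case, in particular when $\tilde{M}_k$ is connected: the multiplicity $m\geq 2$ convergence on $\tilde{M}\sm\tilde{\mathcal{Y}}$ recorded in the excerpt guarantees that two distinct local sheets over $\tilde{M}$ exist in either subcase, and the local argument above then produces the required positive Jacobi field.
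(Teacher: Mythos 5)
Your proposal is correct and takes essentially the same route as the paper's own proof: establish stability of the limit (or of its two-sided double cover in the one-sided case), then combine the Schoen--Yau rearrangement of the stability inequality with test function $\phi\equiv 1$ and the Gauss--Bonnet theorem to force $\chi>0$ and read off the area bounds. The only difference is that the paper simply cites \cite{Sha15, ACS15} for the stability of a multiplicity~$\geq 2$ limit rather than re-deriving it via the positive Jacobi field construction as you sketch (where, to be safe, one should note that propagating the local normalization on $\Omega$ to uniform bounds on all compact subsets of $M\sm\mathcal{Y}$ requires a Harnack-chain argument, which is the substantive step in those references).
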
	
			
			\begin{rmk}
				The conclusion of the lemma above is actually \textsl{false} if one only assumes $M$ to be stable (in lieu of the stronger condition that it arises as a geometric limit with higher multiplicity). For instance, in the product Riemannian manifold $\R\mathbb{P}^2\times S^1$ for any closed geodesic $\Gamma\subset \R\mathbb{P}^2$ the minimal surface $\Gamma\times S^1$ (a torus) is actually stable. 	
			\end{rmk}
			
			\begin{proof}
				
				If $M$ is two-sided then the conclusion comes straight from observing that such a $M$ must be stable (cf. \cite{Sha15, ACS15}). 
				
				 If instead $M$ is one-sided, we consider the construction of the twofold cover $\tilde{M}\subset\tilde{U}$.
				One still has that the convergence of $\tilde{M}_k$ to $\tilde{M}$ happens with the same multiplicity $m\geq 2$. Thus $\tilde{M}$ is stable and the argument above applies. Hence $\tilde{M}\simeq S^2$ or $\tilde{M}\simeq \R\mathbb{P}^2$ (the latter option to be ruled out by standard covering arguments) and thus $M\simeq \R\mathbb{P}^2$. 
				
				Let us now justify the area bounds. If $M$ is two-sided, then the statement comes directly from the stability inequality through the rearrangement trick by Schoen-Yau:
				
				\[
				\frac{1}{2}\int_{\Sigma}(R_g+|A^2|)\phi^2\,d \h^2 \leq\int_{\Sigma}|\nabla_{\Sigma}\phi|^2  \,d \h^2 +\int_{\Sigma}K\phi^2 \,d \h^2 
				\] 
				choosing the test function $\phi\equiv 1$ and recalling that the Gauss-Bonnet theorem gives
				\[
				\int_{\Sigma}K\,d \h^2 =\begin{cases}
				4\pi & \textrm{if} \ M\simeq S^2 \\
				2\pi & \textrm{if} \ M\simeq \R\mathbb{P}^2.
				\end{cases}
				\]
				In the case of a one-sided $\mathbb{R}\mathbb{P}^2$, we just need to notice it is covered twice by $\tilde{M}\subset \tilde{U}$ in the sense explained in the first part of the proof. The surface $\tilde{M}$ is two-sided and so the estimate above applies, which implies that its area is bounded by $8\pi/\rho$, as we had claimed. 
					\end{proof}

			\section{Strong compactness theorems}\label{sec:comp}
			
			We start with the proof of Theorem \ref{thm:conv1}.
			
			\begin{proof}
				First of all, let us recall that the work by Chodosh-Ketover-Maximo \cite{CKM15} ensures that, in a 3-manifold of positive scalar curvature, a uniform bound on the Morse index implies a bound on the area, so that given $j$ as in the statement we can find $\Lambda\in\mathbb{R}$ such that $\left\{M_k\right\}\subset \mathfrak{M}(\Lambda,j)$ and Theorem \ref{thm:main} is applicable.
				
				After passing to such a subsequence converging as in Theorem \ref{thm:main}, the argument is then divided in three cases, depending on what kind of bubbles arise. The first case yields the claimed result while the latter two cases will lead to contradictions.

				\

				\framebox[5.6cm]{Case $1$: There are no bubbles.}	\ In this case, $\mathcal{Y}=\emptyset$ and the convergence is smooth everywhere with finite multiplicity $m\geq 1$. Hence, equation \eqref{eq:geomquant} specifies in this case to $\chi(M_k)=m\chi(M)$, which holds true for all $k$ sufficiently large. Furthermore, if it were $m\geq 2$ then, by virtue of Lemma \ref{lem:stable-type}, we would infer that $\chi(M)\in\left\{1,2\right\}$ and in fact $\chi(M)=2$ in case $(N^3,g)$ does not contain any minimal $\R\mathbb{P}^2$. Therefore, the equation above gives a contradiction with the assumption that $\chi(M_k)<2$ (respectively an obvious contradiction in the latter case) and so we can exclude $m\geq 2$. In particular, in this case, we conclude the claimed smooth multiplicity one convergence.
				
				\
				
				\framebox[6.4cm]{Case $2$: All bubbles are catenoids.} \ In this case, we can assume that $m\geq 2$ (for else there would be no point of bad convergence, hence no bubbles, and the conclusion comes from Case $1$). Therefore we also have $\chi(M)\in\left\{1,2\right\}$ (and in fact $\chi(M)=2$ in case $(N^3,g)$ does not contain any minimal $\R\mathbb{P}^2$). Moreover, since all bubbles are catenoids, they all satisfy $\chi(\Sigma^y_{\ell})-b^y_{\ell}=-2$. Recalling Lemma \ref{lem:sumindex} and equation \eqref{eq:geomquant}, we obtain
     		\begin{equation}\label{eq:allcat}
     		\chi(M_k) =m\chi (M)+\sum_{y\in\mathcal{Y}}\sum_{\ell=1}^{J_y} (\chi(\Sigma^y_{\ell})-b^y_{\ell})
		= m\chi (M)-2\sum_{y\in\mathcal{Y}} J_y
		\geq m\chi (M)-2j.
     		\end{equation}	
				
This means that $\chi(M_k) \geq 2-2j$ and in fact $\chi(M_k) \geq 4-2j$ if $(N^3,g)$ does not contain any minimal $\R\mathbb{P}^2$, in contradiction to the assumption of the theorem.
				
				\
				
				\framebox[7.7cm]{Case $3$: There is a non-catenoidal bubble.} \ First, we notice that this case can only happen for $j=4$, which can be seen as follows. If $j\leq 3$ and there exist some bubbles, then by Lemma \ref{lem:sumindex} they must have index at most three. But by the results of Chodosh-Maximo \cite{CM14, CM18b}, there are no bubbles of index two or three, hence they must have index at most one. By the characterization of stable minimal surfaces in \cite{FCS80, dCP79, Pog81} bubbles cannot have index zero, hence all bubbles have index one and by \cite{LR89} are therefore catenoids, a contradiction.
								
				Furthermore, by virtue of \cite{Sc83}, no bubble has one end and bubbles with two ends must be catenoids. Thus, in this case the non-catenoidal bubble has at least three ends, which also implies that $m\geq 3$. We can now employ the estimate proven in \cite{CM18b} for embedded complete minimal surfaces $\Sigma^2\subset\R^3$ of finite Morse index
				\begin{equation}\label{eq:CMineq}
				3 index(\Sigma)\geq 2genus(\Sigma)+4b(\Sigma)-5,
				\end{equation}
				to conclude
\begin{equation}\label{eqCMapplied}
\begin{aligned}
				\sum_{y\in\mathcal{Y}}\sum_{\ell=1}^{J_y} (\chi(\Sigma^y_{\ell})-b^y_{\ell})	 &=\sum_{y\in\mathcal{Y}}\sum_{\ell=1}^{J_y} (2-2 genus(\Sigma^y_{\ell})-2b^y_{\ell})\\ 
				&\geq \sum_{y\in\mathcal{Y}}\sum_{\ell=1}^{J_y} (2b^y_{\ell}-3) - 3\sum_{y\in\mathcal{Y}}\sum_{\ell=1}^{J_y} index(\Sigma^y_{\ell}) \geq 3-3j.
\end{aligned}
\end{equation}
The last inequality follows from Lemma \ref{lem:sumindex} and the fact that $(2b^y_{\ell}-3)\geq 3$ for the non-catenoidal bubble (while $(2b^y_{\ell}-3)\geq 0$ for all other bubbles).

Hence, equation \eqref{eq:geomquant} yields $\chi(M_k) \geq 3\chi(M)+3-3j \geq 6-3j= 2-2j$ (since $j=4$) in the general case and $\chi(M_k) \geq 3\chi(M)+3-3j \geq 9-3j>4-2j$ (again using $j=4$) if $(N^3,g)$ does not contain any minimal $\R\mathbb{P}^2$, in contradiction to the assumption of the theorem.
\end{proof}
			
			When we deal with sequences of minimal surfaces having their Morse indices bounded by some integer $j$, possibly large, we can still prove a compactness theorem along the same lines:

			\begin{thm}\label{thm:conv2}
				Let $(N^3,g)$ be a compact Riemannian manifold, without boundary, of positive scalar curvature. Let $\left\{M_k\right\}$ be a sequence of closed, embedded minimal surfaces with Morse index bounded from above by $j\geq 5$. If
				\begin{enumerate}
					\item[\underline{\textsl{either}}]{$\chi(M_k)<6-3j$ for all $k\in\mathbb{N}$,}
					\item[\underline{\textsl{or}}]{$\chi(M_k)<9-3j $ for all $k\in\mathbb{N}$, and $N^3$ does not contain any embedded minimal $\R\mathbb{P}^2$ (which happens, for instance, if $N^3$ is simply connected),}
				\end{enumerate}
				then, up to extracting a subsequence, one has that $\left\{M_k\right\}$ converges smoothly to some closed, embedded minimal surface $M$, of Morse index bounded from above by $j$, with multiplicity one.
			\end{thm}

			\begin{proof}
				Once again, observe that the applicability of the bubbling analysis (Theorem \ref{thm:main}) is ensured by \cite{CKM15}. We can therefore proceed along the lines of the proof of Theorem \ref{thm:conv1}. Indeed, if there are no bubbles, then we obtain smooth convergence with multiplicity one, since $m\geq 2$ would force $\chi(M_k)\geq 2$ by \eqref{eq:geomquant}, while by assumption clearly $\chi(M_k)<0$. 
								
				In the remainder of the proof, we therefore assume, towards a contradiction, that $m\geq 2$ and the class of bubbles is not empty. 
				First, assume that all bubbles are catenoids. Following the argument of Case 2 from the previous proof, and using the assumption that $j\geq 5$, we conclude $\chi(M_k) \geq 2-2j > 6-3j$ as well as $\chi(M_k) \geq 4-2j \geq 9-3j$ if $(N^3,g)$ does not contain any minimal $\R\mathbb{P}^2$, thus the desired contradiction. 
				
				Hence it remains to consider the case where there is at least one non-catenoidal bubble. Exactly as in Case 3 of the previous proof, we conclude that such a bubble has at least three ends and we must have $m\geq 3$. Therefore, estimate \eqref{eqCMapplied} again gives us $\chi(M_k) \geq 3\chi(M)+3-3j \geq 6-3j$ and $\chi(M_k) \geq 3\chi(M)+3-3j \geq 9-3j$ if $(N^3,g)$ does not contain any minimal $\R\mathbb{P}^2$, yielding the final contradiction.
			\end{proof}		
			
		Let us now turn to the study of minimal surfaces having a uniform bound on their index and area, without any topological assumption. We prove Theorem \ref{thm:conv1'} from the introduction.
				
		\begin{proof}
		Here, we use the general index estimate \eqref{eq:CMineq} in the form
\begin{equation}
b(\Sigma)-1 \leq \frac{3}{4} index(\Sigma)+\frac{1}{4},
\end{equation}
in particular dropping the term involving the genus of $\Sigma$. Combining this estimate with the conclusion of Proposition \ref{pro:mult} we get that a converging sequence of minimal surfaces with index uniformly bounded by $j$ shall converge with multiplicity $m$ bounded from above via a linear function of $j$. We need to distinguish two cases depending on whether the limit $M$ is two-sided or one-sided. In the former case, using Lemma \ref{lem:sumindex}, we get
			\[
			m \leq  1 + \sum_{y\in\mathcal{Y}}\sum_{\ell=1}^{J_y} (b^y_\ell - 1)
			  \leq   1 + \sum_{y\in\mathcal{Y}}\sum_{\ell=1}^{J_y}\left(\frac{3}{4}index(\Sigma^y_l)+\frac{1}{4}\right)   
			 \leq   1+\frac{3}{4}j+\frac{1}{4}\sum_{y\in\mathcal{Y}}J_y\leq 1+j.
			\]
		
			Therefore, using Lemma \ref{lem:stable-type}, we can then derive (when $m\geq 2$) that
			\[
			\limsup_{k\to\infty}\h^2(M_k)\leq \frac{8\pi}{\rho}\left(1+j\right).
			\]
			
In the latter case, if $M$ is one-sided, one proves instead the inequality $m\leq 2+2j$. Again, by Lemma \ref{lem:stable-type}, (now with $M\simeq \R\mathbb{P}^2$), we can then derive (for $m\geq 2$) that
			\[
			\limsup_{k\to\infty}\h^2(M_k)\leq \frac{4\pi}{\rho}\left(2+2j\right) = \frac{8\pi}{\rho}\left(1+j\right).
			\]
In both cases we obtain a contradiction to the assumption and hence conclude that $m=1$ and the convergence is smooth.
			\end{proof}

			\section{Topological lower semicontinuity}
			
			This section is devoted to the presentation of the proof of Theorem \ref{thm:lsc}, which is in fact a fairly direct consequence of Proposition \ref{pro:mult}.
			
			\begin{proof}
			Let us recall from Corollary \ref{lem:geomquant} that for $k$ large we have
			\[
			\chi(M_k)  = m\chi(M) + \sum_{y\in\mathcal{Y}}\sum_{\ell=1}^{J_y} (2-2genus(\Sigma^y_\ell) - 2b^y_\ell).
			\] 
			Now we apply the multiplicity estimate, starting by dividing the above by two: 
			\begin{eqnarray*}
			1-genus(M_k) &=& m(1-genus(M)) - \sum_{y\in\mathcal{Y}}\sum_{\ell=1}^{J_y} genus(\Sigma^y_\ell) - \sum_{y\in\mathcal{Y}}\sum_{\ell=1}^{J_y} (b^y_\ell -1) \\
			&\leq & m(1-genus(M)) - \sum_{y\in\mathcal{Y}}\sum_{\ell=1}^{J_y} genus(\Sigma^{y}_{\ell})+1-m.
			\end{eqnarray*}
			The claimed inequality comes by simply rearranging the terms. 
			
			When equality occurs in \eqref{eq:lsc}, then in particular we must have equality in the multiplicity estimate, thus
			\[
			m=1+\sum_{y\in\mathcal{Y}}\sum_{\ell=1}^{J_y}(b^y_{\ell}-1).
			\]
				\end{proof}

			\begin{rmk}	\label{rem:nonorient}
				In case either the limit surface $M$ and/or the surfaces belonging to the sequence $\left\{M_k\right\}$ are non-orientable one can gain similar results by simply following the same argument modulo recalling the appropriate expression for the Euler characteristic, and using part (2) of Proposition \ref{pro:mult} in lieu of part (1).
			\end{rmk}

  \end{document}